\title{\textsc{Ideally regular categories}}
\author{
  Sandra Mantovani\textsuperscript{*}
  \\
  {\small\texttt{sandra.mantovani@unimi.it}}
  \and
  Mariano Messora\textsuperscript{*}
  \\
  {\small\texttt{mariano.messora@unimi.it}}
}
\date{
\vspace{-.5em}
\footnotesize{\textsuperscript{*}Department of Mathematics, University of Milan, Via Cesare Saldini 50, 20133 Milan, Italy
}
\\
\vspace{1em}
\normalsize{\today}
\vspace{-1em}
}
\begin{document}
\maketitle%
\hrule
\vspace{.4em}
\noindent\textbf{Abstract.} In this note, we propose a generalisation of G.~Janelidze’s notion of an ideally exact category beyond the Barr exact setting. We define an \emph{ideally regular category} as a regular, Bourn protomodular category with finite coproducts in which the unique morphism $0 \to 1$ is \emph{effective for descent}. As in the ideally exact case, ideally regular categories support a notion of ideal that classifies regular quotients. Moreover, they admit a characterisation in terms of monadicity over a \emph{homological} category (rather than a \semiab{} one, as in the exact setting). Examples include \bprot{} \qvars{} of universal algebra in which $0 \to 1$ is effective for descent (such as the category of torsion-free unital rings), all \bprot{} topological varieties with at least one constant (such as topological rings), and all semi-localisations of ideally exact categories. \blfootnote{\href{http://www.tac.mta.ca/tac/volumes/45/11/45-11abs.html}{Published in \emph{Theory Appl.\ Categ.}\ 45.11 (2026), pp.\ 391--400}}
\vspace{.4em}
\hrule
\vspace{.4em}
\noindent\textit{Keywords:}
Ideally regular category; ideally exact category; ideal; homological category; quasivariety; topological variety.

\noindent\textit{2020 MSC:} 18E08; 18E13; 18C15; 08C15; 22A99.
\vspace{.4em}
\hrule
\section*{Introduction}
In analogy with the classical theory of ideals of unital rings,  G.~Janelidze introduced \emph{ideally exact categories} in \cite{IDE} as a class of categories equipped with a well-behaved notion of `{ideal}' capable of classifying quotient objects. Formally, ideally exact categories 
 are defined as Barr exact, \bprot{} (\cite{OG-BOURN}) categories with finite coproducts such that the unique morphism $0 \to 1$ is a regular epimorphism. For an ideally exact category $\cat A$, the pullback functor along $p\colon0\to1$ between the corresponding slice categories
\[
p^\ast\colon\cat A\cateq\slice A1\to\slice A0
\]
is monadic, and $\slice A0$ is \semiab{} (in the sense of \cite{SEMI-AB}) -- in fact, ideally exact categories can be equivalently characterised as Barr exact categories with finite coproducts that are monadic over a \semiab{} category. A fundamental feature of such a category $\cat A$ is that for any object $A\in\cat A$, the \emph{ideals of $A$} -- defined as normal subobjects of $p^\ast(A)$ -- are in bijective correspondence with the regular quotients of $A$. A paradigmatic example is indeed the category $\Ring$ of unital rings, which is in fact ideally exact. The corresponding functor $p^\ast\colon\Ring\to\slices\Ring0$ (with $0=\Z)$ coincides, up to equivalence, with the forgetful functor $\Ring\to\Rng$ to the \semiab{} category of non-unital rings, thereby recovering the usual notion of an ideal of a ring. Ideally exact categories  provide a wide generalisation of this classical situation, and they encompass many commonly encountered examples, including all \bprot{} varieties with at least one constant, the dual category of any elementary topos, and all coslices of \semiab{} categories. 

The aim of the present work is to extend this framework beyond the Barr exact setting. A leading motivation is the attempt to capture  examples from the world of \bprot{} \qvars{} and topological varieties (\cite{ROQUE-PM,BORCEUX-TOP}), as these are generally not Barr exact categories. To this end, we propose the notion of an \emph{ideally regular category}, defined as a regular, \bprot{} category with finite coproducts in which the unique morphism $0 \to 1$ is \emph{effective for descent} (see for example \cite{FACETS-I}). As effective descent morphisms coincide with regular epimorphisms in the Barr exact context, one immediately sees that every ideally exact category is, in particular, ideally regular. Nevertheless, we show that this broader notion still satisfies analogues of many of the key properties of ideally exact categories established in \cite{IDE} -- although their validity outside the exact setting requires non-trivial reworking of the original arguments. Notably, we prove that ideally regular categories still support a notion of \emph{ideal} -- defined in the same way as in the ideally exact setting -- such that the ideals of an object are in bijective correspondence with its regular quotients (\zcref{thm:ideals}). Moreover, we find that ideally regular categories turn out to be precisely those regular categories with finite coproducts that are monadic over a \emph{homological} (\cite{BBBOOK}) category with finite coproducts (\zcref{idr-e}), thereby paralleling the characterisation of ideally exact categories in terms of monadicity over a \semiab{} category.
Finally, we show that, as desired, this broader context encompasses many new examples, explored in \zcref{sec:def-ex}. Notable instances include all \bprot{} \qvars{} in which $0 \to 1$ is effective for descent (such as the category of torsion-free unital rings), all \bprot{} topological varieties with at least one constant (such as the category of topological unital rings), as well as all \semiloc{}s of ideally exact categories and all coslices of homological categories having finite coproducts. We also provide an example of a \bprot{} \qvar{} where $0 \to 1$ is a regular epimorphism which is not effective for descent. In this case we show that ideals fail to classify quotients, thus proving that even in a well-behaved setting, the effective descent assumption is not automatic and remains essential to the theory.
%%%%%%%
\section{Definition and examples}
\label{sec:def-ex}
In this section, we formally state an (intrinsic) definition of \emph{ideally regular category} and we provide a list of illustrative examples. A more detailed justification of this definition will follow from the general properties established in the subsequent sections.

Here and throughout, for any category $\cat A$ we use $0_{\cat A}$ and $1_{\cat A}$ to denote the initial and terminal objects of $\cat A$, respectively (dropping the subscript `$\cat A$' when the ambient category is clear for context), and for any object $X\in\cat A$, we write $!_X$ and  $!^X$ for the unique morphisms $0\to X$ and $X\to1$, respectively.

For the background on \bprot{} and homological categories, we refer the reader to \cite{BBBOOK}. For material on descent theory, see for example \cite{FACETS-I}.
\begin{definition}
\label{def-id-reg}
We say that a category $\cat A$ is \emph{ideally regular} if it is a regular, \bprot{} category with finite coproducts such that the unique morphism $0\to1$ in $\cat A$ is effective for descent.
\end{definition}

\begin{example}
    Every ideally exact category is ideally regular, and an ideally regular category is ideally exact if and only if  it is Barr exact.
\end{example}
\begin{example}
    Every homological category with finite coproducts is ideally regular, and an ideally regular category is homological if and only if it is pointed.
\end{example}
\begin{example}
\label{ex-quasivar}
\Qvars{} of universal algebra are ideally regular precisely when they are \bprot{} and the morphism $0\to1$ is effective for descent. \bprot{} \qvars{} are characterised in the same way as \bprot{} varieties, that is by having, for some natural number $n$, constants $e_1,\dots,e_n$, binary terms $t_1,\dots,t_n$ and an $(n+1)$-ary term $t$ such that $t(x,t_1(x,y),\dots,t_n(x,y))=y$ and $t_k(x,x)=e_k$ for $k=1,\dots,n$ (see \cite{GRAN-ROS,ROQUE-PM}). In particular, \qvars{} whose theory contains a group operation are \bprot{}.

Whether the map $0 \to 1$ is effective for descent in a \qvar{} is more subtle. There exist various criteria in the literature for detecting when a morphism is effective for descent (see again \cite{FACETS-I} and the references therein). In the case of a \qvar{} $\varty Q$, it is well-known that there exists a variety $\varty V$ such that $\varty Q$ is a full subcategory of $\varty V$ closed under finite limits. If $p\colon0_{\varty Q} \to 1_{\varty Q}$ is a regular epimorphism in $\varty Q$, then $p$ is also a regular epimorphism in $\varty V$, since in both categories regular epimorphisms are simply surjective maps (\cite{ROQUE-ED}); hence $p$ is effective for descent in $\varty V$. By applying \cite[Corollary in 2.7]{FACETS-I} together with the fact that $1_{\varty Q}$ is terminal in both $\varty Q$ and $\varty V$, one finds that $p$ is effective for descent in $\varty Q$ if and only if for every $A \in \varty V$ such that $0_{\varty Q} \times A \in \varty Q$ (with `$\times$' denoting the product in $\varty V$), one has $A \in \varty Q$. Using this criterion, we obtain the following examples.
\begin{enumerate}[(a)]
    \item \label{it-torsion-free-rings} Let $\varty Q$ be  the (\bprot{}) \qvar{} of unital torsion-free rings, that is unital rings satisfying $\impl {n\cdot x=0}{x=0}$ for all positive integers $n$. Here, $0=\Z$ and the map $0\to 1$ is clearly surjective. Take $\varty V$ to be the variety of all unital rings, so that $\varty Q$ is a full subcategory of $\varty V$ closed under limits. Suppose $R\in\varty V$ is such that $\Z\times R$ is torsion-free. If $x\in R$ is such that $n\cdot x=0$ for some positive integer $n$, then $n\cdot (0,x)=0$ in $\Z\times R$, hence $(0,x)=0$, and so $x=0$. Thus $R$ is torsion-free and $0\to 1$ is effective for descent. We conclude that the \qvar{} of torsion-free unital rings is ideally regular. Note, however, that $\varty Q$ is not a Barr exact category.
    \item In a similar fashion, one checks that the \qvar{} of reduced unital rings (i.e.\ unital rings satisfying $\impl{x^2=0}{x=0}$) and the \qvar{} of unital torsion-free algebras over a fixed commutative unital ring $R$ (i.e.\ unital $R$-algebras satisfying $\impl{r\cdot x=0}{x=0}$ for all non-zero-divisors $r \in R$) are also ideally regular.
    \item\label{it:w-Q} Consider instead the (\bprot{}) \qvar{} $\varty Q$ of rings of characteristic 0 (defined by the implications $\impl{n\cdot 1=0}{1=0}$ for all positive integers $n$). The initial object of $\varty Q$ is $\Z$ with the usual operations, and once again the morphism $0_{\varty Q}\to 1_{\varty Q}$ is surjective. Note that $\varty Q$ is a full subcategory closed under limits of the variety $\Ring$ of unital rings, and observe that, for all rings $A\in\Ring$, the product ring $\Z\times A$ is always a ring of characteristic 0. We conclude that the regular epimorphism $0_{\varty Q}\to 1_{\varty Q}$ is \emph{not} effective for descent, showing that $\varty Q$ is not ideally regular.   
\end{enumerate}
\end{example}
\begin{example}
    A \semiloc{} (as in \cite{MANTOVANI-SL}) of an ideally exact category is ideally regular. Indeed, if $\cat A$ is a \semiloc{} of a Barr exact, \bprot{} category $\cat B$, by \cite{GRAN-LACK} $\cat A$ is regular and \bprot{}, and effective descent morphisms in $\cat A$ coincide with regular epimorphisms. If moreover $\cat B$ has finite coproducts, then so does $\cat A$, being reflective in $\cat B$. Writing $F\colon \cat{B}\to \cat{A}$ for the left adjoint to the inclusion $\cat A\hookrightarrow\cat B$, we have $F(0_{\cat{B}}\to 1_{\cat{B}})=(0_{\cat{A}}\to 1_{\cat{A}})$ since $\cat{A}$ is closed under limits in $\cat{B}$ and $F$ preserves colimits. Furthermore, if $0_{\cat{B}}\to 1_{\cat{B}}$ is a regular epimorphism, then so is $0_{\cat{A}}\to 1_{\cat{A}}$, since $F$ preserves regular epimorphisms; hence it is also effective for descent. As an example, the category of  unital torsion-free rings (in the sense of \zcref{ex-quasivar}.\ref{it-torsion-free-rings}) is a (non-exact) \semiloc{} of the category of unital rings.  This follows from the fact that the torsion elements of any ring form an ideal. 

    Note however that not all ideally regular categories are \semiloc{}s of ideally exact categories. For instance, consider the (\bprot{}) \qvar{} $\varty Q$ of abelian groups or unital rings satisfying $\impl{4\cdot x=0}{2\cdot x=0}$ (example borrowed from \cite{FACETS-I,ROQUE-ED}). The morphism $0_{\varty Q}\to1_{\varty Q}$ is effective for descent (this is obvious in the case of abelian groups; for rings, use the criterion in \zcref{ex-quasivar}), and hence $\varty Q$ is ideally regular. However, as shown in \cite{FACETS-I,ROQUE-ED}, the canonical morphism $\Z\to\Zm2$ is a regular epimorphism which is not effective for descent, and hence $\varty Q$ cannot be a \semiloc{} of a Barr exact category.
\end{example}
\begin{example}
    Let $\theory T$ be an \emph{ideally exact algebraic theory}, i.e.\ a \bprot{} algebraic theory with at least one constant. Then the category $\cat A=\AlgThr T\Top$ of models of $\theory T$ in the category of topological spaces is ideally regular. Indeed, we know from \cite{BORCEUX-TOP} that $\cat A$ is regular and \bprot{}. Effective descent morphisms in $\cat A$ coincide with regular epimorphisms and consist of open surjective maps (see for instance \cite{JOHNSTONE-PEDICCHIO,GRAN-ROSSI}). Since $0_{\cat A}$ is non-empty (as $\theory T$ has at least one constant) and $1_{\cat A}$ is always a singleton, it follows that the morphism $0_{\cat A}\to 1_{\cat A}$ is always open and surjective.
    Note moreover that $\cat A$ is in general not Barr exact, since an effective equivalence relation on an object $A$ needs to be equipped with the topology induced by the product topology on $A\times A$. This is not the case for a general internal equivalence relation in $\AlgThr T\Top$.
\end{example}

\section{Equivalent characterisations}
In this section we aim to characterise ideally regular categories in terms of properties of a monadic functor over a homological category, in analogy with results from \cite{IDE}.

Before doing so, we fix some notation concerning slice categories and recall some related facts from \cite{IDE} which will be needed in what follows. 

Given a category $\cat A$ and an object $X\in\cat A$, objects of the comma category $\slice AX$ will be written as pairs $(A,\alpha)$, with $\alpha\colon A\to X$, and morphisms in $\slice AX$ will be written as morphisms of the underlying objects. When $\cat A$ has pullbacks, for any morphism $p\colon E\to B$ in $\cat A$, we will denote by
\begin{equation*}
    p_!\adj p^\ast\colon\slice AB\to\slice AE
\end{equation*}
the \emph{adjunction associated to $p$}, given, for any $(D,\delta)\in \slice AE$ and $(A,\alpha)\in\slice AB$, by
\[
\hskip\textwidth minus \textwidth
p_!(D,\delta)=(D,\comp \delta p),
\hskip\textwidth minus \textwidth
p^\ast (A,\alpha)=(P,\pi_1),
\hskip\textwidth minus \textwidth
\]
with $\pi_1\colon P\to A$ denoting the projection on $A$ of the pullback of $\alpha$ along $p$. Let us now recall the following theorem from \cite{IDE}.

\begin{theorem}[{\cite[Theorem~2.4]{IDE}}]
\label{canonical-monad}
Let $\monad T=(T,\eta, \mu)$ denote the monad associated to the above adjunction. The following properties hold.
\begin{itemize}
    \item $\monad T$ is \emph{cartesian}, meaning that $T$ preserves pullbacks, and all the naturality squares associated to $\eta$ and $\mu$ are pullbacks;
    \item when $\cat A$ is regular, $T$ preserves regular epimorphisms and kernel pairs;
    \item when $\cat A$ is \bprot{} and has finite coproducts, $\monad T$ is \emph{essentially nullary}, meaning that for all objects $X\in\cat A$, the morphism 
    \[(T(!_X), \eta_X)\colon T(0)+X\to T(X)\]
    is a strong epimorphism.
\end{itemize}    
\end{theorem}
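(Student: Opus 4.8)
The strategy is to verify each of the three clauses by working along the forgetful functor $V\colon\slice{\cat A}{E}\to\cat A$ (and its companion $V'\colon\slice{\cat A}{B}\to\cat A$), exploiting that these create pullbacks and kernel pairs, preserve finite coproducts (computed on underlying objects, with the copairing as structure map), and preserve and reflect monomorphisms and isomorphisms, as well as — once $\cat A$ is regular — regular and strong epimorphisms. Before starting I would record the explicit data to be manipulated: for $X=(D,\delta)$ one has $T(X)=(D\times_B E,\pi_E)$ with its two pullback projections $\pi_D,\pi_E$; the unit $\eta_X\colon X\to T(X)$ has underlying arrow the graph $D\to D\times_B E$ of $\delta$, i.e.\ the unique section of $\pi_D$ lying over $E$; the counit of $p_!\adj p^\ast$ at $(A,\alpha)$ is $\pi_A\colon A\times_B E\to A$, which is precisely the pullback of $p$ along $\alpha$; and $\mu=p^\ast\varepsilon p_!$.

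For the first clause, $p_!$ preserves pullbacks since $V'p_!=V$ and both $V,V'$ create pullbacks, while $p^\ast$ preserves all limits as a right adjoint, so $T=p^\ast p_!$ preserves pullbacks. The naturality squares of the counit $\varepsilon$ are pullbacks by the pasting lemma — each $\varepsilon_{(A,\alpha)}$ being a pullback of the single fixed arrow $p$ — so, applying the pullback-preserving functor $p^\ast$, so are those of $\mu=p^\ast\varepsilon p_!$; and for $\eta$ one observes that the graph of $\delta$ is the pullback of the relative diagonal $\Delta_{E/B}\colon E\to E\times_B E$ along $\delta\times_B E$, whence a further pasting argument makes the naturality squares of $\eta$ pullbacks as well. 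For the second clause, $p_!$ preserves regular epimorphisms because it is the identity on underlying arrows and, in a regular category, regular epimorphisms in a slice are exactly those with regular epimorphic underlying arrow; $p^\ast$ preserves them because the comparison $A\times_B E\to A'\times_B E$ is, inside $\cat A$, a pullback of the underlying regular epimorphism $A\to A'$, hence again a regular epimorphism by stability under pullback; so $T$ preserves regular epimorphisms. Kernel pairs are preserved by $p_!$ (they are computed on underlying objects) and by $p^\ast$ (a right adjoint), hence by $T$.

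The third clause is the heart of the matter and the only point where Bourn protomodularity and the finite coproducts genuinely intervene. The plan rests on two observations: (i) the underlying arrow of $\eta_X$ is a section of the split epimorphism $\pi_D\colon D\times_B E\to D$; and (ii) the underlying arrow of $T(!_X)\colon T(0)\to T(X)$ is exactly the pullback of this split epimorphism $\pi_D$ along the initial arrow $!_D\colon 0_{\cat A}\to D$. Granting these, I would invoke the characterisation of Bourn protomodularity (see \cite{BBBOOK}): when a split epimorphism is pulled back along an arbitrary arrow, the resulting total arrow of the pullback square, together with the chosen section, form a jointly strongly epic pair. In view of (i)--(ii) this says precisely that $\bigl(T(!_X),\eta_X\bigr)$ is jointly strongly epic in $\cat A$, equivalently that $\bigl(T(!_X),\eta_X\bigr)\colon T(0)+X\to T(X)$ is a strong epimorphism — the coproduct being formed by means of the finite coproducts of $\cat A$, and the conclusion transported back to $\slice{\cat A}{E}$ since $V$ reflects strong epimorphisms. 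I expect the only real obstacle to be isolating observations (i) and (ii); once they are in hand protomodularity closes the argument almost immediately, and the first two clauses fall out formally from the explicit descriptions of $T$, $\eta$ and $\mu$.
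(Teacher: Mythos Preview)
The paper does not give its own proof of this statement: it is simply recalled, with citation, as \cite[Theorem~2.4]{IDE}. So there is no in-paper argument to compare your attempt against.

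That said, your direct verification is correct. For the first clause, your pasting arguments go through: each component of $\varepsilon$ is a pullback of the fixed arrow $p$, and each component of $\eta$ is a pullback of the fixed arrow $\Delta_{E/B}$ (this is exactly your observation that the graph of $\delta$ is the pullback of the relative diagonal along $\delta\times_B E$); hence all naturality squares of $\varepsilon$ and $\eta$ are pullbacks by the usual two-out-of-three pasting argument, and those of $\mu=p^\ast\varepsilon p_!$ follow since $p^\ast$ preserves pullbacks. The second clause is immediate from pullback-stability of regular epimorphisms and the fact that $p_!$ acts as the identity on underlying arrows. For the third clause, your observations (i) and (ii) are precisely what is needed: (ii) holds because the square with top $T(!_X)$ and right side $\pi_D$ is the outer pasting of the defining pullback of $D\times_B E$ with the map $!_D$, and then the jointly-strongly-epic reformulation of \bproty{} from \cite{BBBOOK} yields the conclusion. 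The transport along $V\colon\slice{\cat A}{E}\to\cat A$ is legitimate since $V$ is faithful, creates pullbacks, reflects isomorphisms and monomorphisms (hence reflects strong epimorphisms), and preserves finite coproducts.
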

We are now ready to state the main result of this section.
\begin{theorem}
\label{idr-e}
Let $\cat A$ be any category. The following are equivalent.
\begin{enumerate}[(a)]
    \item \label{idr-e-a}$\cat A$ is ideally regular;
    \item \label{idr-e-b}$\cat A$ is regular, has finite coproducts, and there exists a monadic functor $\cat A\to \cat X$ where $\cat X$ is homological and admits finite coproducts;
    \item \label{idr-e-c}there exists a monadic functor $\cat A\to\cat X$ such that both $\cat A$ and $\cat X$ have finite coproducts, $\cat X$ is homological, and the underlying functor of the corresponding monad preserves regular epimorphisms and kernel pairs.  
\end{enumerate}
\end{theorem}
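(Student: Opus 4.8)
The plan is to prove the cycle of implications \ref{idr-e-a}$\,\Rightarrow\,$\ref{idr-e-c}$\,\Rightarrow\,$\ref{idr-e-b}$\,\Rightarrow\,$\ref{idr-e-a}, of which the first two are routine and the last carries all the weight. Throughout I will use the standard characterisation that a morphism $p\colon E\to B$ in a category $\cat C$ with pullbacks is effective for descent precisely when the pullback functor $p^\ast\colon\slice{\cat C}{B}\to\slice{\cat C}{E}$ is monadic.

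For \ref{idr-e-a}$\Rightarrow$\ref{idr-e-c}: given $\cat A$ ideally regular, I would take $\cat X:=\slice{\cat A}{0}$ together with the functor $p^\ast\colon\cat A\cateq\slice{\cat A}{1}\to\slice{\cat A}{0}$ associated to $p\colon0\to1$. Since $p$ is effective for descent, $p^\ast$ is monadic; the category $\slice{\cat A}{0}$ is regular and \bprot{}, being a slice of such a category, and it is \emph{pointed} because $0$ is initial, hence it is homological; it inherits finite coproducts from $\cat A$, which also has them. Finally, \zcref{canonical-monad} applied to $p$ (using that $\cat A$ is regular) says the monad of $p^\ast$ preserves regular epimorphisms and kernel pairs, which is precisely \ref{idr-e-c}. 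For \ref{idr-e-c}$\Rightarrow$\ref{idr-e-b} it only remains to see that $\cat A\simeq\cat X^{\monad T}$ is regular, which is the standard fact that the Eilenberg--Moore category of a monad over a regular category that preserves regular epimorphisms and kernel pairs is again regular: one lifts (regular epi, mono)-factorisations along the monadic forgetful functor, and transfers pullback-stability of regular epimorphisms using that this functor both preserves and reflects them.

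The substance is \ref{idr-e-b}$\Rightarrow$\ref{idr-e-a}. Write $U\colon\cat A\to\cat X$ for the monadic functor, with $F\dashv U$ and $\monad T=(UF,\dots)$; regularity and finite coproducts of $\cat A$ are hypotheses, so $\cat A$ has an initial and a terminal object and $0\to1$ is meaningful. First I would check that $\cat A$ is \bprot{}: the functor $U$ preserves pullbacks and is conservative, and these two properties force $\cat A$ to be \bprot{} once $\cat X$ is — if a morphism $\varphi$ of points over an object $X\in\cat A$ becomes invertible after pulling back along some $f$, then so does $U\varphi$ after pulling back along $Uf$, whence $U\varphi$ is invertible because $\cat X$ is \bprot{}, and then $\varphi$ is invertible because $U$ is conservative. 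Then comes the crux, that $p\colon0\to1$ is effective for descent, i.e.\ that $p^\ast\colon\cat A\to\slice{\cat A}{0}$ is monadic. The key observation is that $U$ carries $0\to1$ to the morphism $q\colon T(0_{\cat X})=U(0_{\cat A})\to U(1_{\cat A})=1_{\cat X}$, and that this $q$ is a \emph{split} epimorphism: $\cat X$ being pointed, $1_{\cat X}$ is also initial, and the unique arrow $1_{\cat X}=0_{\cat X}\to T(0_{\cat X})$ is a section $s$ of $q$. Since split epimorphisms are effective for descent, $q^\ast\colon\cat X\to\slice{\cat X}{T(0_{\cat X})}$ is monadic and has a one-sided inverse $s^\ast$. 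Because $U$ preserves products there is a commuting square of functors $\widetilde U\circ p^\ast\cong q^\ast\circ U$, in which $\widetilde U\colon\slice{\cat A}{0}\to\slice{\cat X}{U(0_{\cat A})}$ is the functor induced by $U$ and is itself monadic (slices of monadic functors are monadic). I would then verify by Beck's theorem that $q^\ast\circ U$ is monadic: it is conservative as a composite of conservative functors, and given a $(q^\ast U)$-split pair in $\cat A$, applying $s^\ast$ converts it into a $U$-split pair, so $U$ creates its coequaliser and $q^\ast$ preserves the resulting split (hence absolute) coequaliser. Finally, from ``$q^\ast\circ U=\widetilde U\circ p^\ast$ is monadic, $\widetilde U$ is monadic, and $p^\ast$ has a left adjoint'' I would deduce that $p^\ast$ is monadic, using the general principle that whenever $G$ and $G\circ F$ are monadic and $F$ has a left adjoint, $F$ is monadic; this is one more application of Beck's theorem, with conservativity of $F$ immediate and an $F$-split pair being automatically $(G\circ F)$-split, so that its coequaliser exists and $G$, being conservative, detects that $F$ preserves it. Together with $\cat A$ being regular, having finite coproducts, and \bprot{} (as just shown), this yields \ref{idr-e-a}.

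I expect the main obstacle to be organising the Beck-theorem bookkeeping in \ref{idr-e-b}$\Rightarrow$\ref{idr-e-a} cleanly — juggling the three monadic functors $U$, $\widetilde U$ and $q^\ast$ and the comparison square, and isolating the small lemma about monadicity of $F$ from that of $G$ and $G\circ F$ — and in particular pinpointing that ``split epimorphisms are effective for descent'' enters exactly through the retraction $s^\ast$, which is what converts $(q^\ast U)$-split pairs into $U$-split pairs. A secondary point worth an explicit word is that $\slice{\cat A}{0}$ is homological: the one non-formal ingredient here is that a slice over the \emph{initial} object is pointed; everything else used (slices of regular \bprot{} categories are regular \bprot{}, slices of monadic functors are monadic, left adjoints preserve initial objects, right adjoints preserve terminal objects, every functor preserves split coequalisers) is entirely standard.
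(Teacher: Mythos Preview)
Your argument is correct, and for \ref{idr-e-a}$\Rightarrow$\ref{idr-e-c}$\Rightarrow$\ref{idr-e-b} it coincides with the paper's. For \ref{idr-e-b}$\Rightarrow$\ref{idr-e-a} you take a genuinely different route. The paper factors the given monadic $U$ directly as $U\cong U^{0}\circ p^{\ast}$, where $U^{0}\colon\slice{\cat A}{0}\to\cat X$ sends $(A,\alpha)$ to $\ker(U(\alpha))$; conservativity of $U^{0}$ is extracted from the proof of \cite[Theorem~2.6]{IDE} (this is where \bproty{} of $\cat X$ enters), and then a cancellation principle from \cite{BOURN91} yields monadicity of $p^{\ast}$. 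You instead pass to the square $\widetilde U\circ p^{\ast}\cong q^{\ast}\circ U$ with $q=U(p)$ split (using only that $\cat X$ is pointed), show $q^{\ast}\circ U$ is monadic via Beck, invoke that $\widetilde U$ is monadic because slices of monadic functors are monadic, and then prove the same cancellation principle by hand. Both are valid; the paper's proof is shorter by outsourcing the two key steps to the literature, while yours is self-contained and, as a byproduct, shows that the effective-descent conclusion for $0\to1$ needs only pointedness of $\cat X$ --- \bproty{} is used solely to transfer \bproty{} to $\cat A$. One small remark: in your final lemma you only use that $G=\widetilde U$ is \emph{conservative} (plus that every functor preserves split coequalisers), not that it is monadic, so your hypothesis there can be weakened --- which is exactly the form of the result the paper cites from \cite{BOURN91}.
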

\begin{proof}
    Let $p$ denote the unique morphism $0\to 1$ in $\cat A$, and let $p_!\adj p^\ast$ be the associated adjunction.
    
    When \ref{idr-e-a} holds, since $p$ is effective for descent, the functor  
    \[p^\ast\colon\cat A\cateq\slice A1\to\slice A0\] 
    is monadic, with $\slice A0$ obviously pointed. Moreover, since $\cat A$ is regular and \bprot{}, so is its slice $\slice A0$, which is hence homological. Lastly, once again using the regularity of $\cat A$, it follows from \zcref{canonical-monad} that $\fmcomp{p_!,p^\ast}$ preserves regular epimorphisms and kernel pairs. This proves that \ref{idr-e-a}$\implies$\ref{idr-e-c}.
    
    Next, if \ref{idr-e-c} holds, then $\cat A$ is regular since it is equivalent to the category of algebras over a monad whose underlying functor preserves regular epimorphisms (see for example \cite{VITALE94}). Hence, \ref{idr-e-b} follows.
    
    Finally, if \ref{idr-e-b} holds, assume that $U\colon\cat A\to \cat X$ is a monadic functor where $\cat A$ is regular, $\cat X$ is homological, and both categories admit finite coproducts. We immediately deduce that $\cat A$ is  \bprot{}. Let $F$ be a left adjoint to $U$, and for simplicity write $F(0)=0$. We obtain the following diagram of adjunctions,
    \begin{cdiag*}
       \cat A\arrow[r, "p^\ast", yshift=.2em] \arrow[d, "U"', xshift=-.2em]&\slice A 0 \arrow[l, "p_!", yshift=-.2em]\arrow[dl, bend left, "U^0", xshift=.17em, yshift=-.17em, shorten=.3]
       \\ 
       \cat X\arrow[u, "F"', xshift=.2em]\arrow[ur,  bend right,yshift=0.17em, xshift=-.17em, shorten=.3em, "F^0"]
    \end{cdiag*}
    where $F^0\adj U^0$ are given by:
    \[
    \hskip\textwidth minus \textwidth
    F^0(X)=(F(X),F(!^X)),
    \hskip\textwidth minus \textwidth
    U^0(A,\alpha)=\ker(U(\alpha))
    \hskip\textwidth minus \textwidth
    \]
    (see \cite{JANELIDZE-91,IDE}). In the proof of \cite[Theorem~2.6]{IDE} it is shown that $U^0$  reflects isomorphisms, only relying on the \bproty{} of $\cat X$. Since $U\iso\fmcomp{p^\ast,U^0}$ is monadic and $U^0$ is conservative, it follows from \cite[Propositions~4 and 5]{BOURN91} that $p^\ast$ is also monadic, and hence $p$ is effective for descent.
\end{proof}

With \zcref{idr-e} in place, we can make the following observations. In particular, \zcref{special-monad} is based on \cite[Theorem~3.3.b]{IDE}, \zcref{rmk:cartesian} on \cite[Remark~3.8]{IDE} and \zcref{ex:coslice-idr} on \cite[Example~3.5]{IDE}.
\begin{corollary}
\label{special-monad}
    Let $\cat A$ be an ideally regular category. Then there exists a monadic functor $\cat A\to \cat X$ such that $\cat X$ is homological and has finite coproducts, the corresponding monad is cartesian, essentially nullary and such that its underlying functor preserves regular epimorphisms and and kernel pairs.
\end{corollary}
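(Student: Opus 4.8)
The plan is to take as witness the very monadic functor produced in the proof of the implication \ref{idr-e-a}$\implies$\ref{idr-e-c} of \zcref{idr-e}: namely, with $p\colon0\to1$ the unique such morphism and $p_!\adj p^\ast$ the associated adjunction, set $\cat X=\slice A0$ and let the functor be $p^\ast\colon\cat A\cateq\slice A1\to\slice A0$. We have already recorded there that, $p$ being effective for descent, $p^\ast$ is monadic, and that $\slice A0$ is homological — being regular and \bprot{} (as a slice of a regular, \bprot{} category) and pointed. It is moreover immediate that $\slice A0$ has finite coproducts, the coproduct of $(A,\alpha)$ and $(B,\beta)$ being $(A+B,[\alpha,\beta])$ with $A+B$ the coproduct in $\cat A$. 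So the only thing left to verify is that the monad $\monad T=\fmcomp{p_!,p^\ast}$ of this monadic functor enjoys the three further properties in the statement.

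But this is precisely the content of \zcref{canonical-monad} applied to $p\colon0\to1$: that theorem asserts that $\monad T$ is cartesian; that, since $\cat A$ is regular, the underlying functor $T$ preserves regular epimorphisms and kernel pairs; and that, since $\cat A$ is \bprot{} and has finite coproducts, $\monad T$ is essentially nullary. Combined with the previous paragraph, this yields the claim.

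In effect the statement is just a synthesis of \zcref{idr-e} and \zcref{canonical-monad}, so there is no genuine obstacle to overcome; the only points demanding (routine) care are the verification that the homological structure and the finite coproducts of $\slice A0$ are as described above, and the observation that \zcref{canonical-monad} is being invoked for the specific morphism $0\to1$, which is exactly the situation at hand.
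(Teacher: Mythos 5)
Your argument is correct and is exactly the paper's proof: the paper also takes the monadic functor $p^\ast\colon\cat A\to\slice A0$ from the proof of \zcref{idr-e} and invokes \zcref{canonical-monad} for the cartesian, essentially nullary, and preservation properties of the associated monad. The extra details you spell out (coproducts in the slice, homologicality of $\slice A0$) are routine checks already implicit in that proof.
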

\begin{proof}
It suffices to use the same monadic functor $p^\ast$ as in the proof of \zcref{idr-e} and recall \zcref{canonical-monad}.
\end{proof}
\begin{remark}
    Note that, in \zcref{idr-e}, unlike in the analogous Theorem~3.1 in \cite{IDE}, the existence of finite coproducts must be assumed explicitly even in \zcref{idr-e-c}. Moreover, as far as we are aware, ideally regular categories do not generally admit finite colimits -- unlike ideally exact categories -- even when they are monadic over a homological category that does (cf.\ \cite[Theorem~3.3.a]{IDE}).
\end{remark}
\begin{remark}
\label{rmk:cartesian} For an ideally regular category $\cat A$, the monadic adjunction $\cat A\rightleftarrows\cat X$ of \zcref{idr-e}.\ref{idr-e-b} and \ref{idr-e-c} coincides, up to equivalence, with the adjunction associated to $0_{\cat A}\to1_{\cat A}$ if and only if the unit of the adjunction is cartesian. This follows from \cite[Theorems~2.4 and 2.6]{IDE}.
\end{remark}
\begin{example}
\label{ex:coslice-idr}
    \begin{enumerate}[wide, labelindent=0pt,nosep,label=(\alph*)]
        \item If $\cat X$ is homological with finite coproducts and $X\in\cat X$, then the coslice $\coslice XX$ is ideally regular. This follows from the fact that the codomain functor $\coslice XX\to\cat X$ is monadic. Furthermore, from \zcref{special-monad}, we know that every ideally regular category is equivalent to a category of algebras $\AlgMon XT$ over an essentially nullary monad $\monad T$ on a homological category $\cat X$ -- and \cite[Theorem~1.1]{CARBONI-JANELIDZE-95} tells us that $\monad T$ is nullary if and only if $\AlgMon XT$ is canonically equivalent to some $\coslice XX$.
        \item A slice $\slice XX$ of a homological category with finite coproducts is ideally regular if and only if $X=0$.
    \end{enumerate}
\end{example}
\begin{remark}
    For any category $\cat A$, the following are equivalent:
\begin{enumerate*}[(a)]
    \item $\cat A$ is finitely complete and \bprot{}, it has an initial object, and the morphism $0\to1$ in $\cat A$ is effective for descent;
    \item there exists a monadic functor $\cat A\to\cat X$ with $\cat X$ pointed, finitely complete and \bprot{}.
\end{enumerate*}   This follows by arguing as in the proof of \zcref{idr-e}.
\end{remark}
\section{Ideals and quotients}
In this section, we show that ideally regular categories support a notion of ideal that classifies regular quotients, in direct analogy with the ideally exact case. To this end, we will make use of the following property of essentially nullary monads, recalled from \cite{IDE}.
\begin{proposition}[{\cite[Corollary~4.2]{IDE}}]
\label{prop-rel-ess-null}
    Let $\monad T$ be an essentially nullary monad on a category with finite coproducts, and let $(X,\xi)$ be a $\monad T$-algebra. Every reflexive relation
    \begin{cdiag*}
        R\arrow[r, "r_0", yshift=.4em]\arrow[r, "r_1"', yshift=-.4em]&X\arrow[l, "e" description]
    \end{cdiag*}
    on $X$ admits a (unique) $\monad T$-algebra structure making $r_0$, $r_1$ and $e$ morphisms of $\monad T$-algebras.
\end{proposition}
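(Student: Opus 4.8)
The plan is to construct the $\monad T$-algebra structure $\rho\colon T(R)\to R$ by a single diagonal fill-in against the essentially nullary ``presentation'' $(T(!_R),\eta_R)$ of $T(R)$, and then to check the algebra axioms by cancelling the jointly monic pair $(r_0,r_1)$.

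First I would assemble the data. Write $x_0 := \xi\circ T(!_X)\colon T(0)\to X$ for the composite selecting the ``constants'' of the algebra $(X,\xi)$. Reflexivity gives $r_0\circ e = r_1\circ e = \mathrm{id}_X$, so $e\circ x_0\colon T(0)\to R$ satisfies $r_0\circ(e\circ x_0)=r_1\circ(e\circ x_0)=x_0$. Set
\[
g := [\,e\circ x_0,\ \mathrm{id}_R\,]\colon T(0)+R\to R,
\qquad
q := (T(!_R),\eta_R)\colon T(0)+R\to T(R),
\]
where $q$ is a strong epimorphism because $\monad T$ is essentially nullary. The goal is to factor $g$ as $\rho\circ q$; the resulting $\rho$ will be the sought algebra structure, and the formula for $g$ is essentially forced — by the unit law on the $R$-summand, and by the requirement that $r_0,r_1$ become algebra morphisms on the $T(0)$-summand.

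The core step is this factorisation. Since $(r_0,r_1)\colon R\to X\times X$ is a monomorphism and $q$ is a strong epimorphism, it suffices to produce a commutative square with top $g$, left $q$, right $(r_0,r_1)$ and bottom $h := (\xi\circ Tr_0,\ \xi\circ Tr_1)\colon T(R)\to X\times X$; the fill-in then yields a unique $\rho\colon T(R)\to R$ with $\rho\circ q = g$ and $(r_0,r_1)\circ\rho = h$, and this latter equation says precisely that $r_0$ and $r_1$ are $\monad T$-algebra morphisms into $(X,\xi)$. To see that the square commutes I would test on the two coproduct summands: on $T(0)$, using $r_i\circ{!_R}={!_X}$ and functoriality of $T$, both composites equal $(x_0,x_0)$; on $R$, using naturality of $\eta$ and $\xi\circ\eta_X=\mathrm{id}_X$, both composites equal $(r_0,r_1)$.

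It then remains to check the algebra axioms and the condition on $e$. The unit law $\rho\circ\eta_R=\mathrm{id}_R$ is read off from $\rho\circ q=g$ on the $R$-summand. For associativity $\rho\circ\mu_R=\rho\circ T\rho$, and for the claim that $e\colon(X,\xi)\to(R,\rho)$ is an algebra morphism, I would post-compose with the monomorphism $(r_0,r_1)$ and cancel it: the first reduces, via naturality of $\mu$ and the fact that $r_0,r_1$ are algebra morphisms, to the associativity law of $(X,\xi)$; the second reduces to $r_i\circ e=\mathrm{id}_X$. Uniqueness follows the same pattern, since any $\rho'$ making $r_0,r_1$ algebra morphisms must satisfy $(r_0,r_1)\circ\rho'=h=(r_0,r_1)\circ\rho$. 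The main obstacle is the factorisation in the previous paragraph: it is exactly there that the two hypotheses genuinely enter — essential nullarity to make $q$ a strong epimorphism, and the assumption that $R$ is an honest relation to supply a monomorphism $(r_0,r_1)$ for $q$ to be orthogonal to — whereas everything afterwards is formal and needs neither regularity nor pointedness of the ambient category.
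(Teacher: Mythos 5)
The paper itself offers no proof of this proposition: it is imported verbatim from \cite{IDE} (Corollary~4.2), so there is no in-text argument to compare against, and what you have written is a correct, self-contained proof of the quoted statement. Your strategy is the natural way essential nullarity gets used: the square with $g=[\,e\circ\xi\circ T(!_X),\ \mathrm{id}_R\,]$ on top, the strong epimorphism $(T(!_R),\eta_R)\colon T(0)+R\to T(R)$ on the left and the monomorphism $(r_0,r_1)$ on the right does commute for exactly the reasons you give (on the $T(0)$-summand because $r_i\circ{!_R}={!_X}$ and $r_i\circ e=\mathrm{id}_X$, on the $R$-summand by naturality of $\eta$ and the unit law for $\xi$); the fill-in $\rho$ satisfies $r_i\circ\rho=\xi\circ T(r_i)$; the unit law, associativity, the claim about $e$, and uniqueness all follow by cancelling the jointly monic pair together with naturality of $\mu$ and the algebra laws for $\xi$. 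This is, in spirit, the same mechanism that underlies the cited source (there the reflexivity serves to show that the ``constants'' land in $R$, which is precisely your use of $e\circ\xi\circ T(!_X)$), so I would not call it a genuinely different route, just a direct one.

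One point to tidy up: the proposition, as quoted, only assumes finite coproducts, whereas your fill-in is performed against the single monomorphism $(r_0,r_1)\colon R\to X\times X$ and hence uses binary products; orthogonality of a strong epimorphism against a merely jointly monic pair is not automatic in their absence. This causes no real trouble here --- in the only place the proposition is applied (\zcref{thm:ideals}) the base category is homological, hence finitely complete, and the source works in a finitely complete setting as well --- but you should either add finite products to your hypotheses or state explicitly that you interpret ``relation'' via a jointly monic pair and justify the corresponding lifting property. With that caveat made explicit, the proof stands.
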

We can now establish the following result.
\begin{theorem}
\label{thm:ideals}
Let $\cat A$ be an ideally regular category. 
Fix a monadic functor $U\colon\cat A\to\cat X$ such that 
\begin{itemize}
    \item $\cat X$ is homological and has finite coproducts;
    \item the monad $\monad T$ corresponding to $U$ is essentially nullary;
    \item the functor $T$ underlying $\monad T$ preserves regular epimorphisms and kernel pairs
\end{itemize}
(the existence of such a functor  is guaranteed by \zcref{special-monad}).
For every object $A$ in $\cat A$, the following (possibly large) posets are canonically isomorphic to each other:
\begin{enumerate}[(a)]
    \item \label{Quot-A}$\Quot {\cat A}A$, consisting of quotient objects of $A$;
    \item \label{Rel-A}$\EERel {\cat A}A$, consisting of effective equivalence relations on $A$;
    \item \label{Rel-UA}$\EERel {\cat X}{U(A)}$, consisting of effective equivalence relations on $U(A)$;
    \item \label{Quot-UA}$\Quot {\cat X}{U(A)}$, consisting of quotient objects of $U(A)$;
    \item \label{Sub-UA}$\NSub {\cat X}{U(A)}$, consisting of normal subobjects of $U(A)$.
\end{enumerate}
\end{theorem}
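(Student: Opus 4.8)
The plan is to establish the chain of isomorphisms by isolating the one genuinely non-formal step, namely $\EERel{\cat A}{A}\cong\EERel{\cat X}{U(A)}$, and by dispatching the other three as instances of standard facts. Since $\cat A$ is regular, sending a regular epimorphism to its kernel pair and an effective equivalence relation to its coequaliser defines mutually inverse bijections $\Quot{\cat A}{A}\cong\EERel{\cat A}{A}$; as $\cat X$ is homological, hence regular, the same argument gives $\Quot{\cat X}{U(A)}\cong\EERel{\cat X}{U(A)}$. For $\Quot{\cat X}{U(A)}\cong\NSub{\cat X}{U(A)}$ I would invoke the standard homological-category facts (see \cite{BBBOOK}) that in $\cat X$ every regular epimorphism is the cokernel of its kernel and every normal monomorphism is the kernel of its cokernel, so that taking kernels and taking cokernels are mutually inverse. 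With the ordering conventions on quotients, relations and normal subobjects fixed consistently, each of these bijections is at once seen to be an isomorphism of posets, so the whole theorem reduces to the transfer isomorphism.

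For the transfer isomorphism I would show that the monadic functor $U$ itself implements it. Identify $\cat A$ with the category of algebras for $\monad T$ and $A$ with an algebra $(X,\xi)$, where $X=U(A)$. Being monadic, $U$ creates --- hence preserves and reflects --- limits; in particular it preserves kernel pairs, so $R\mapsto U(R)$ carries the kernel pair of any morphism of $\cat A$ to the kernel pair of a morphism of $\cat X$, and thus maps $\EERel{\cat A}{A}$ monotonically into $\EERel{\cat X}{U(A)}$. For the inverse, start with an effective equivalence relation $S$ on $U(A)$ in $\cat X$, with legs $s_0,s_1$. It is in particular a reflexive relation on the $\monad T$-algebra $(X,\xi)$, so by \zcref{prop-rel-ess-null} it carries a \emph{unique} $\monad T$-algebra structure $\sigma$ making $s_0$, $s_1$ and its reflexivity map into $\monad T$-homomorphisms; write $\bar S$ for the resulting relation in $\cat A$. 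It then remains to check that $\bar S$ is an effective equivalence relation in $\cat A$, that $U(\bar S)=S$, that $S\mapsto\bar S$ is monotone, and that the two assignments are mutually inverse.

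The crux --- and the step I expect to be the main obstacle --- is the \emph{effectiveness} of $\bar S$: as $\cat A$ is only regular and need not be exact, one cannot simply lift a relation and declare it effective, so a quotient must be genuinely transported along $U$. Here I would argue thus. Using regularity of $\cat X$, write $S$ as the kernel pair of a regular epimorphism $q\colon U(A)\to Q$ (namely its coequaliser). Since $T$ preserves regular epimorphisms and kernel pairs, $T(q)$ is again a regular epimorphism whose kernel pair has legs $T(s_0),T(s_1)$, hence $T(q)$ is their coequaliser; and since $s_0,s_1$ are $\monad T$-homomorphisms for $\sigma$ while $qs_0=qs_1$, the morphism $q\circ\xi$ coequalises $T(s_0),T(s_1)$ and so factors uniquely as $\theta\circ T(q)$ for a unique $\theta\colon T(Q)\to Q$. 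The $\monad T$-algebra axioms for $(Q,\theta)$ follow from those for $(X,\xi)$ after cancelling the epimorphisms $T(q)$ and $T(T(q))$. Now $q\colon(X,\xi)\to(Q,\theta)$ is a morphism of $\cat A$, and it is a regular epimorphism there because $U$ reflects regular epimorphisms: its image in $\cat X$ is a strong epimorphism, so the monomorphism part of the regular-epi/mono factorisation of $q$ in the regular category $\cat A$ --- being a monomorphism through which that strong epimorphism factors --- becomes invertible in $\cat X$, and hence is invertible since $U$ is conservative. Finally, $U$ creating limits forces the kernel pair of $q$ in $\cat A$ to have underlying object $S$ equipped with a $\monad T$-structure for which $s_0$, $s_1$ and the diagonal are $\monad T$-homomorphisms, so by the uniqueness in \zcref{prop-rel-ess-null} that kernel pair is precisely $\bar S$. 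Therefore $\bar S$ is the kernel pair of a regular epimorphism in $\cat A$, so it is an effective equivalence relation (this also subsumes the a priori verification that $\bar S$ is an equivalence relation), and applying $U$ recovers $S$. Conversely, an effective equivalence relation $R$ in $\cat A$ already carries a $\monad T$-structure for which its legs and reflexivity map are homomorphisms, so the same uniqueness gives $\overline{U(R)}=R$; and a short leg-wise chase, post-composing with the jointly monic legs, shows $S\mapsto\bar S$ is monotone. Hence $R\mapsto U(R)$ and $S\mapsto\bar S$ are mutually inverse isomorphisms of posets, and composing with the three isomorphisms of the first paragraph finishes the proof.
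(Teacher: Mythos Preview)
Your proposal is correct and follows essentially the same route as the paper: you reduce to the transfer step $\EERel{\cat A}{A}\cong\EERel{\cat X}{U(A)}$, lift the reflexive relation via \zcref{prop-rel-ess-null}, push the coequaliser $q$ to a $\monad T$-algebra structure on $Q$ using that $T$ preserves kernel pairs and regular epimorphisms, and then invoke creation of limits to identify the kernel pair of $q$ in $\cat A$ with $\bar S$. Your argument is somewhat more explicit than the paper's (you additionally verify that $q$ is a regular epimorphism in $\cat A$ and spell out mutual inverseness and monotonicity), but the underlying strategy is the same.
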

\begin{proof}
    The isomorphism between the posets in \zcref[typeset=ref]{Quot-A,Rel-A} is known, as  are those between the posets in \zcref[typeset=ref]{Quot-UA,Rel-UA,Sub-UA}. To prove that the posets in \zcref[typeset=ref]{Rel-A,Rel-UA} are isomorphic, let us identify $\cat A$ with the Eilenberg-Moore category $\AlgMon X T$ of algebras over $\monad T$, so that $U\colon \AlgMon XT\to \cat X$ is the canonical forgetful functor. Under this identification, any object $A\in\cat A$ corresponds to some algebra $(X,\xi)\in\AlgMon XT$, with $\xi\colon T(X)\to X$ in $\cat X$. Since $U$ is a right adjoint, it induces an injective morphism
    \begin{cdiag*}
        \EERel{\AlgMon XT}{(X,\xi)}\rar&\EERel {\cat X} X.
    \end{cdiag*}
    Now, in $\cat X$, consider an effective equivalence relation on $X$ together with its coequaliser:
    \begin{cdiag*}
        R\arrow[r, "r_0", yshift=.2em]\arrow[r, "r_1"', yshift=-.2em] &X\rar{q}&Q.
    \end{cdiag*}
    By \zcref{prop-rel-ess-null}, $R$ admits a $\monad T$-algebra structure $\rho\colon TR\to R$ making $r_0$ and $r_1$ morphisms of $\monad T$-algebras. We aim to show that $((R,\rho),r_0,r_1)$ is an effective relation in $\AlgMon XT$. Consider the solid arrows in the following diagram.
     \begin{cdiag*}
        R\arrow[r, "r_0", yshift=.2em]\arrow[r, "r_1"', yshift=-.2em] &X\rar{q}&Q
        \\
        T(R)\arrow[r, "T(r_0)", yshift=.2em]\arrow[r, "T(r_1)"', yshift=-.2em]\arrow[u, "\rho"]&T(X)\arrow[u, "\xi"description]\arrow[r, "T(q)"'] & T(Q)\arrow[u, dashed, "\chi"']
    \end{cdiag*}
    Since $T$ preserves kernel pairs and regular epimorphisms, $T(q)$ is the coequaliser of $T(r_0)$ and $T(r_1)$. Now,  $\mcomp{T(r_0),\xi,q}=\mcomp{T(r_1),\xi,q}$ and hence there exists a unique $\chi\colon T(Q)\to Q$ such that $\comp \xi q=\comp{T(q)}\chi$. Using that $q$ and $T(T(q))$ are epimorphisms, one easily checks that $(Q,\chi)$ is a $\monad T$-algebra, and by construction, $q$ is a morphism of $\monad T$-algebras from $(X,\xi)$ to $(Q,\chi)$. Since monadic functors reflect and create limits, we conclude that $((R,\rho),r_0,r_1)$ is the kernel pair of $q\colon (X,\xi)\to(Q,\chi)$ in $\AlgMon XT$, concluding the proof.
\end{proof}
\begin{remark}
    In a homological category, the poset of normal subobjects of a given object forms a \mtsl{}, with the meet of two normal subobjects given by their pullback. Consequently, all the posets of \zcref{thm:ideals} are in fact \mtsl{}s. Moreover, the poset of normal subobjects of an object becomes a lattice as soon as the ambient category also admits pushouts of regular epimorphisms along regular epimorphisms. This condition is automatically satisfied when the category is Barr~exact (see \cite{BBBOOK}).
\end{remark}
In light of \zcref{thm:ideals}, the definition of an \emph{ideal} from \cite{IDE} extends naturally to the context of ideally regular categories. We point out, however, that in \cite[Theorem~4.3]{IDE}, the functor underlying the monad associated to $U$ is not required to preserve regular epimorphisms and kernel pairs, unlike in our \zcref{thm:ideals}.
\begin{definition}
    In the situation of \zcref{thm:ideals}, given an object $A$ in $\cat A$, a normal monomorphism in $\cat X$ with codomain $U(A)$ will be called a \emph{$U$-ideal} of $A$. In particular, if $U$ is the pullback functor $\cat A\to\slice A0$ along $0\to 1$, then $U$-ideals of $A$ will be simply called \emph{ideals} of $A$.
\end{definition}
\begin{example} When $\cat A$ is, for instance, the category of unital torsion-free rings (as in \zcref{ex-quasivar}.\ref{it-torsion-free-rings}) or the category of unital topological rings, then the corresponding functor $p^\ast\colon\cat A\to\slice A0$ coincides, up to equivalence, with the obvious forgetful functor to the category of non-unital torsion-free rings and non-unital topological rings, respectively.  In these cases, we therefore recover the natural notion of ideal one would expect.
\end{example}
\begin{remark}
    In any category $\cat A$ with finite limits and an initial object, one can still define the  \emph{ideals} of an object $A\in\cat A$ as the normal subobjects of $p^\ast(A)$, with $p\colon 0\to1$. However, such ideals will generally fail to classify regular quotients, even under considerably strong assumptions on $\cat A$. Consider for instance the \qvar{} $\varty Q$ of \zcref{ex-quasivar}.\ref{it:w-Q} of rings of characteristic 0. We have seen that $\varty Q$ is a regular, \bprot{} category with finite coproducts such that $p\colon 0=\Z\to1$ is a regular epimorphism but not an effective descent morphism. One can show that, up to equivalence, the corresponding functor $p^\ast$ coincides with the obvious forgetful functor to the category of non-unital rings. Yet the ring $\Z$ has no non-trivial quotients of characteristic 0, while still admitting infinitely many ideals.
\end{remark}
\section*{Acknowledgments}
This research was conducted while both authors were affiliated with INdAM -- Istituto Nazionale di Alta Matematica ‘Francesco Severi’, Gruppo Nazionale per le Strutture Algebriche, Geometriche e le loro Applicazioni (GNSAGA). 
\printbibliography
\end{document}